\newcommand\reallywidehat[1]{%
\savestack{\tmpbox}{\stretchto{%
  \scaleto{%
    \scalerel*[\widthof{\ensuremath{#1}}]{\kern-.6pt\bigwedge\kern-.6pt}%
    {\rule[-\textheight/2]{1ex}{\textheight}}
  }{\textheight}%
}{0.5ex}}%
\stackon[1pt]{#1}{\tmpbox}%
}
\newcommand{\calF}{\mathcal{F}}
\newcommand{\calT}{\mathcal{T}}
\newcommand{\mC}{\mathbb{C}}
\newcommand{\mN}{\mathbb{N}}
\newcommand{\mR}{\mathbb{R}}
\newcommand{\mT}{\mathbb{T}}
\newcommand{\mZ}{\mathbb{Z}}
\newtheorem{theorem}{Theorem}[section]
\newtheorem{lemma}[theorem]{Lemma}
\newtheorem{corollary}[theorem]{Corollary}
\newtheorem{proposition}[theorem]{Proposition}
\theoremstyle{definition}
\theoremstyle{definition}
\theoremstyle{definition}
\theoremstyle{definition}
\begin{document}

\keywords{shift operator, invariant subspace, Besicovitch space, almost periodic functions, 
Bohr compactification, Haar measure}

\subjclass[2010]{Primary 42A75; Secondary 47A15, 47B38, 43A75}

 \title[]{Doubly invariant subspaces of \\
 the Besicovitch space}

\author[]{Amol Sasane}
\address{Department of Mathematics \\London School of Economics\\
     Houghton Street\\ London WC2A 2AE\\ United Kingdom}
\email{A.J.Sasane@lse.ac.uk}

 \maketitle
 
 \begin{abstract}
 A classical result of Norbert Wiener characterises doubly shift-invariant subspaces for square integrable functions on the unit circle with respect to a finite positive Borel measure $\mu$, as being the ranges of the multiplication maps corresponding to the characteristic functions of $\mu$-measurable subsets of the unit circle. An analogue of this result is given for the Besicovitch Hilbert space of `square integrable almost periodic functions'.
 \end{abstract}

 \section{Introduction}
 
 \noindent The aim of this article is to prove a version of the classical result due to N.~Wiener, characterising doubly shift-invariant subspaces (of the Hilbert space square integrable functions on the circle with respect to a finite positive Borel measure), for the Besicovitch Hilbert space. We give the pertinent definitions below.  
 
 First we recall the aforementioned classical result. See e.g. \cite[Thm.11, \S14, Chap.II]{W} or \cite[Thm. 1.2.1, p.8]{Nik} or \cite[Thm.11, \S14, Chap.II]{W}. Let $\mu$ be a finite, nonnegative Borel measure on the unit circle $\mT:=\{z\in \mC: |z|=1\}$, and let $L^2_\mu(\mT)$ be the Hilbert space of all functions $f:\mT\rightarrow \mC$ such that 
 $$
 \|f\|_2^2:=\int_{\;\!\mT} |f(\xi)|^2 \;\!d\mu(\xi)<\infty,
 $$
 with pointwise operations, and the inner product 
$$
\langle f,g\rangle=\int_{\;\!\mT} f(\xi)\;\! \overline{g(\xi)}\;\!d\mu(\xi)
$$
for $f,g\in L^2_\mu(\mT)$. 
Here $\overline{\cdot}$ denotes complex conjugation.  

\goodbreak 

\noindent  For a $\mu$-measurable set, $\mathbf{1}_\sigma$ is the indicator/characteristic function of $\sigma$, i.e., 
$$
 \mathbf{1}_\sigma(w)=\left\{\begin{array}{ll} 1& \textrm{if } w\in \sigma,\\
 0 & \textrm{if } w\in \mT\;\! \setminus \sigma.
 \end{array}\right.
 $$
The multiplication operator $M_z:L^2_\mu(\mT) \rightarrow  L^2_\mu(\mT)$ is given by 
$$
(M_zf)(w)=wf(w) \;\textrm{ for all }w\in \mT,\; f\in L^2_\mu(\mT),
$$
and is called the {\em shift-operator}. A closed subspace $E\subset L^2_\mu(\mT)$ is called {\em doubly invariant} if 
$M_z E\subset E$ and $(M_z)^* E\subset E$. A closed subspace is doubly invariant if and only if $M_z E=E$. The following result gives a characterisation of doubly invariant subspaces of $L^2_\mu(\mT)$:

 \begin{theorem}[N. Wiener]$\;$
 
 \noindent 
 Let $E\subset L^2_\mu(\mT)$ be a closed subspace of $L^2_\mu(\mT)$. Then $M_z E=E$ if and only if there exists a unique measurable set $\sigma \subset \mT$ such that 
 $$
 E=\mathbf{1}_\sigma L^2_\mu(\mT)=\{f\in L^2_\mu(\mT): f=0\;\mu\textrm{-a.e. on }\mT\;\!\setminus \sigma\}.
 $$
 \end{theorem}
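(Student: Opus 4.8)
The plan is to reduce everything to a description of the orthogonal projection onto $E$. The easy implication and uniqueness come first: if $E=\mathbf 1_\sigma L^2_\mu(\mT)$, then since multiplication by $\mathbf 1_\sigma$ commutes with $M_z$ and $M_z$ is invertible with inverse $M_{\bar z}=(M_z)^*$, one checks directly that $M_zE=E$, while $E$ is closed because the condition ``$f=0$ $\mu$-a.e.\ on $\mT\setminus\sigma$'' defines a closed subspace; and if $\mathbf 1_{\sigma_1}L^2_\mu(\mT)=\mathbf 1_{\sigma_2}L^2_\mu(\mT)$ then $\mathbf 1_{\sigma_1}$ vanishes $\mu$-a.e.\ off $\sigma_2$ and conversely, forcing $\sigma_1=\sigma_2$ up to a $\mu$-null set.

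For the substantive implication, let $P$ be the orthogonal projection of $L^2_\mu(\mT)$ onto $E$. First I would note that $M_z$ is unitary (since $|\xi|=1$ on $\mT$) with $(M_z)^*=M_{\bar z}$, and that $M_zE=E$ means $E$ reduces $M_z$; hence $P$ commutes with $M_z$, with $M_{\bar z}$, and therefore with $M_p$ for every trigonometric polynomial $p$. Next, using that $\mu$ is finite so that the constant function $\mathbf 1$ lies in $L^2_\mu(\mT)$, I would set $h:=P\mathbf 1\in E$; then for every trigonometric polynomial $p$ we have $M_p\mathbf 1=p$, whence $Pp=PM_p\mathbf 1=M_pP\mathbf 1=ph$. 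So $P$ acts as multiplication by $h$ on the dense subspace of trigonometric polynomials.

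The main technical step, and the one I expect to be the real obstacle, is to upgrade this to $h\in L^\infty_\mu(\mT)$ with $\|h\|_\infty\le 1$. From $\|Pp\|_2\le\|p\|_2$ one gets $\int_{\mT}|p|^2(|h|^2-1)\,d\mu\le 0$ for all trigonometric polynomials $p$; approximating an arbitrary $\varphi\in C(\mT)$ uniformly by trigonometric polynomials (and using that $\mu$ is finite), this persists with $\varphi$ in place of $p$, and since every nonnegative continuous function on $\mT$ is of the form $|\varphi|^2$, inner/outer regularity of the finite signed Borel measure $(|h|^2-1)\,d\mu$ forces $|h|^2\le 1$ $\mu$-a.e. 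Then $M_h$ is a contraction on $L^2_\mu(\mT)$ agreeing with $P$ on a dense set, so $P=M_h$. Finally, $P=P^2$ gives $h=h^2$ $\mu$-a.e.\ and $P=P^*$ gives $h=\bar h$ $\mu$-a.e., so $h=\mathbf 1_\sigma$ $\mu$-a.e.\ with $\sigma:=\{\xi\in\mT:h(\xi)=1\}$ a $\mu$-measurable set, and therefore $E=\textrm{range}(P)=M_{\mathbf 1_\sigma}L^2_\mu(\mT)=\mathbf 1_\sigma L^2_\mu(\mT)$, as desired.
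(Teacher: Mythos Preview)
Your proof is correct, but the route differs from the one the paper follows in its proof of the analogous result for $L^2_\mu(\mR_B)$ (which, as the paper notes, is the same argument as the classical one in Nikolski). Both arguments begin with $h:=P\mathbf{1}$, but you exploit the reducing-subspace fact $PM_z=M_zP$ to identify $P$ globally as the multiplication operator $M_h$, and then read off $h=\mathbf{1}_\sigma$ from $P^2=P=P^*$. The paper never shows $P=M_h$: it uses only the elementary orthogonality $\mathbf{1}-h\perp E$ together with $z^n h\in E$ to obtain $\int p\,h(1-\bar h)\,d\mu=0$ for every trigonometric polynomial $p$, whence $h(1-\bar h)=0$ $\mu$-a.e.\ and $h=\mathbf{1}_\sigma$ directly, and then verifies the two inclusions $\mathbf{1}_\sigma L^2_\mu\subset E$ and $E\subset \mathbf{1}_\sigma L^2_\mu$ by separate density and orthogonality arguments. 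Your approach is more operator-theoretic and delivers $E=\operatorname{range}(P)$ in one stroke once $P=M_h$ is known, at the cost of the auxiliary $\|h\|_\infty\le 1$ step; the paper's approach bypasses that bound entirely but must establish the two inclusions by hand.
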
 
 
 \noindent We will prove a similar result when $L^2_\mu(\mT)$ is replaced by $AP^2$, the Besicovitch Hilbert space. 
 We recall this space and a few of its properties in the following section, before stating and proving our main result in the final section.

 \section{Preliminaries on the Besicovitch space $AP^2$}

 \noindent For $\lambda \in \mR$, let  $e_\lambda:=e^{i\lambda \cdot }\in L^\infty(\mR)$. 
 Let $\calT$ be the space of trigonometric polynomials, i.e., 
 $\calT$ is the linear span of $\{e_\lambda: \lambda \in \mR\}$. 
 The {\em Besicovitch space} $AP^2$ is the completion of $\calT$ with respect to the inner product 
 $$
 \langle p,q\rangle =\lim_{R\rightarrow \infty} \frac{1}{2R}\int_{-R}^R p(x)\;\! \overline{q(x)} \;\!dx,
 $$
 for $p,q\in \calT$, and where $\overline{\cdot}$ denotes complex conjugation. 
 We remark that elements of $AP^2$ are not to be thought of as functions on $\mR$: 
For example, consider 
 the sequence $(q_n)$ in $\calT$, where 
 $$
 q_n(x):=\sum_{k=1}^n \frac{1}{k}e^{i \frac{1}{k} x } \quad (x\in \mR).
$$
Then $(q_n)$ converges to an element of $AP^2$, but $(q_n(x))_{n\in \mN}$  diverges for all 
$x\in \mR$ (see \cite[Remark~5.1.2, p.91]{Par}). Although elements of $AP^2$ may not be functions on $\mR$, they can be identified 
as functions on the Bohr compactifcation $\mR_B$ of $\mR$, and we elaborate on this below.  
We refer the reader to  \cite[\S7.1]{BKS} and the references therein for further details. 

\noindent For a locally compact Abelian group $G$ written additively, the dual group $G^*$ is the set  
 all continuous characters of $G$. Recall that a {\em character of $G$} is a map  $\chi: G\rightarrow \mT$ such that 
$$
\chi(g+h)=\chi(g)\;\! \chi(h) \quad (g,h\in G).
$$
Then $G^*$ becomes an Abelian group with pointwise multplication, but we continue to write the group operation  in $G^*$ also additively, motivated by the special characters 
$$
G=\mR\owns x\stackrel{\theta}{\mapsto}e^{i\theta x}\in \mT,
$$
 when $G=\mR$. So 
the inverse of $\chi\in G^*$ is denoted by $-\chi$. Then $G^*$ is a locally compact Abelian group with the topology 
given by the basis formed by the sets 
$$
U_{g_1,\cdots, g_n;\epsilon} (\chi)
:=\{ \eta \in G^*: |\eta(g_i)-\chi(g_i)|<\epsilon \textrm{ for all } 1\leq i\leq n\},
$$
where $\epsilon>0$, $n\in \mN:=\{1,2,3,\cdots\}$, $g_1,\cdots, g_n\in G$. 

Let $G^*_d$ denote the group $G^*$ with the discrete topology. The dual group $(G^*_d)^*$ of $G^*_d$ is called the {\em Bohr compactification of $G$}. By the Pontryagin duality theorem\footnote{See e.g. \cite[p.189]{Kat}.}, $G$ is the set of all continuous characters of $G^*$, and since $G_B$ is the set of all (continuous or not) characters of $G^*$, $G$ can be considered to be contained in $G_B$. It can be shown that $G$ is dense in $G_B$. 
Let $\mu$ be the normalised Haar measure in $G_B$, that is, $\mu$ is a positive regular Borel measure such that 

\vspace{0.1cm}

\noindent $\;\bullet\;$  (invariance) $\mu(U)=\mu(U+\xi)$ for all Borel sets $U\subset G_B$, and 
all $\xi\in G_B$,

\vspace{0.05cm}

\noindent $\;\bullet\;$ (normalisation)  $\mu(G_B)=1$. 

\vspace{0.1cm}

\noindent 
 Let $\mR_B=(R^*_d)^*$ denote the Bohr compactification of $\mR$. 
  Let $\mu$ be the normalised Haar measure on $\mR_B$. 
  Let $L^2_{\mu}(\mR_B)$ be the Hilbert space of all functions $f:\mR_B\rightarrow \mC$ such that 
 $$
 \|f\|_2^2:=\int_{\mR_B} |f(\xi)|^2 \;\!d\mu(\xi)<\infty,
 $$
 with pointwise operations and the inner product 
$$
\langle f,g\rangle=\int_{\mR_B} f(\xi)\;\! \overline{g(\xi)}\;\!d\mu(\xi)
$$
for all $f,g\in L^2_{\mu}(\mR_B)$. 
 The Besicovitch space $AP^2$ can be identified as a Hilbert space with $L^2_{\mu}(\mR_B)$, and let  $\iota: AP^2\rightarrow L^2_{\mu}(\mR_B)$ be the Hilbert space isomorphism. 
 Let $L^\infty_{\mu}(\mR_B)$  be the space of $\mu_{B}$-measurable functions that are essentially bounded (that is bounded on $\mR_B$ up to a set of measure $0$) 
 with the essential supremum norm
 $$
 \|f\|_\infty:= \inf\{M\geq 0: |f(\xi)|\leq M \textrm{ a.e.}\}.
 $$
 For an element  $f\in L^\infty_{\mu}(\mR_B)$, let $M_f :L^2_{\mu}(\mR_B)\rightarrow L^2_{\mu}(\mR_B)$ be the multiplication map 
 $\varphi\mapsto f\varphi$, where $f\varphi$ is the pointwise multiplication of $f$ and $\varphi$ as functions on $\mR_B$.  
 
 Let $AP\subset L^\infty(\mR)$ be the $C^*$-algebra of 
 almost periodic functions, namely the closure in $L^\infty(\mR)$ of the space $\calT$ of trigonometric polynomials.    
  Then it can be shown that $AP\subset AP^2$, and $\iota(AP)=C(\mR_B)\subset L^\infty_{\mu}(\mR_B)$. Also, for $f\in AP$,
  $$
\|f\|_2=  \|\iota f\|_2\leq \|\iota f\|_\infty=\|f\|_\infty.
  $$
 For $f,g\in AP$, and $\lambda \in \mR$, 
 $$
 \iota(fg)=(\iota f)(\iota g), \quad \iota(e_0)=\mathbf{1}_{\mR_B}, 
 \quad \overline{\iota(e_\lambda)}=
 \iota(\overline{e_\lambda})=\iota(e_{-\lambda}).
 $$ 
 Every element $f\in AP$ gives a multiplication map, $M_{\iota(f)}$ on $ L^2_{\mu}(\mR_B)$. 
 
 For $f\in AP^2$, the {\em mean value} 
 $$
 {\mathbf{m}}(f):=\int_{\mR_B} (\iota(f))(\xi) \;\!d\mu(\xi)=\langle \iota f, \iota \;\!e_0\rangle= \langle f, e_0\rangle
 $$ 
 exists. The set 
  $$
 \Sigma(f):=\{\lambda \in \mR: {\mathbf{m}}(e_{-\lambda} f)\neq 0\}
 $$ 
 is called the {\em Bohr spectrum of $f$}, and can be shown to be at most countable. We have a Hilbert space isomorphism, via the Fourier transform, between $L^2(\mT)$ and $\ell^2(\mZ)$:
 $$
 L^2(\mT)\owns f\mapsto  (\widehat{f}(n):=\langle f, e^{-int}\rangle)_{n\in \mZ}\in \ell^2(\mZ) .
 $$
  Analogously, we have a representation of $AP^2$ via the Bohr transform. We elaborate on this below. 
 
 Let $\ell^2(\mR)$ be the set of all $f:\mR\rightarrow \mC$ for which the set $\{\lambda \in \mC: f(\lambda)\neq 0\}$ is countable and 
 $$
 \|f\|_2^2:=\sum_{\lambda\;\!\in\;\! \mR} |f(\lambda)|^2  <\infty.
 $$
 Then $\ell^2(\mR)$ is a Hilbert space with pointwise operations and the inner product 
 $$
 \langle f,g\rangle =\sum_{\lambda\;\! \in \;\!\mR} f(\lambda)\;\! \overline{g(\lambda)} .
 $$
 For $\lambda \in \mR$, define the {\em shift-operator} $S_\lambda:\ell^2(\mR)\rightarrow \ell^2(\mR)$ by 
 $$
 (S_\lambda f)(\cdot)=f(\cdot-\lambda).
 $$
 Let $c_{00}(\mR)\subset \ell^2(\mR)$ be the set of finitely supported functions. Define the map 
 $\calF: c_{00}(\mR)\rightarrow AP^2$ as follows: For $f\in c_{00}(\mR)$, 
 $$
 (\calF f)(x)=\sum_{\lambda \;\!\in\;\! \mR} f(\lambda) \;\!e^{i\lambda x}\quad (x\in \mR).
 $$
 By continuity,  $\calF: c_{00}(\mR)\rightarrow AP^2$ can be extended to a map (denoted by the same symbol) 
 $\calF:\ell^2(\mR)\rightarrow AP^2$ defined on all of $\ell^2(\mR)$, and is called the {\em Bohr transform}. The map $\calF :\ell^2(\mR)\rightarrow AP^2$ is a Hilbert space isomorphism.  
 The inverse Bohr transform $\calF^{-1}: AP^2\rightarrow \ell^2(\mR)$ is given by 
 $$
 (\calF^{-1} f)(\lambda)={\mathbf{m}}(f e_{-\lambda}) \quad (\lambda \in \mR).
 $$
 For $\lambda \in \mR$ and $f\in L^2_{\mu}(\mR_B)$, we have 
 the following equality in $\ell^2(\mR)$:
 $$
\calF^{-1} \iota^{-1}(M_{\iota(e_\lambda)} f)
=
(\calF^{-1}\iota^{-1}f)(\cdot-\lambda) 
= 
S_\lambda (\calF^{-1}\iota^{-1}f).
$$
We also note that by the Cauchy-Schwarz inequality in $L^2_{\mu}(\mR_B)$, 
for all functions $f,g\in L^2_\mu(\mR_B)$, we have
\begin{eqnarray*}
\Big(\sum_{\lambda\;\! \in \;\!\mR} (\calF^{-1} \iota^{-1} f)(\lambda)\;\!\overline{(\calF^{-1} \iota^{-1} g)(\lambda)}\Big)^2 
\!\!\!\!&\leq&\!\!\!\! \sum_{\alpha\;\! \in \;\!\mR} |(\calF^{-1} \iota^{-1} f)(\alpha)|^2
\sum_{\beta\;\!\in \;\!\mR} |(\calF^{-1} \iota^{-1} g)(\beta)|^2
\\
\!\!\!\!&=&\!\!\!\!\|f\|_2^2\;\!\|g\|_2^2.
\end{eqnarray*}
We will need the following approximation result (see e.g. \cite{Cor} or \cite{BKS}): 

\begin{proposition}
Let $f\in AP$ and $\Sigma(f)$ be its Bohr spectrum. Then there exists a sequence $(p_n)_{n\in \mN}$ in $\calT$  such that 
\begin{itemize}
\item for all $n\in \mN$, $\Sigma(p_n)\subset \Sigma(f)$, and 
\item $ (p_n)_{n\in \mN}$ converges uniformly to $f$ on $\mR$. 
\end{itemize}
\end{proposition}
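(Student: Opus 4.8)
The plan is to produce the $p_n$ by the classical \emph{Bochner--Fej\'er} construction: one convolves $f$ against a nonnegative almost periodic kernel of mean value $1$ whose frequencies lie in a conveniently chosen finitely generated subgroup of $(\mR,+)$. Throughout, write $\mathbf{m}_y[\,\cdot\,]$ for the mean value taken in the variable $y$, which equals $\mathbf{m}$ applied to the resulting element of $AP$. Since $\Sigma(f)$ is at most countable, enumerate it as $\Sigma(f)=\{\lambda_1,\lambda_2,\dots\}$. Fix $n\in\mN$. As $f\in AP$ is by definition a uniform limit of trigonometric polynomials, pick $q\in\calT$ with $\|f-q\|_\infty<\tfrac1{3n}$, and let $\mu_1,\dots,\mu_m\in\mR$ be the finitely many frequencies occurring in $q$ together with $\lambda_1,\dots,\lambda_n$. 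The subgroup $\Gamma\subset(\mR,+)$ generated by $\mu_1,\dots,\mu_m$ is finitely generated and torsion-free, hence free abelian; fix a $\mZ$-basis $\gamma_1,\dots,\gamma_K$ of $\Gamma$, so that every $\mu\in\Gamma$ has a unique expression $\mu=\sum_{\ell=1}^K c_\ell(\mu)\gamma_\ell$ with $c_\ell(\mu)\in\mZ$.

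Next I would build the kernel. Let $F_N(t)=\sum_{|j|\le N}\bigl(1-\tfrac{|j|}{N+1}\bigr)e^{ijt}=\tfrac1{N+1}\bigl(\tfrac{\sin((N+1)t/2)}{\sin(t/2)}\bigr)^2\ge 0$ be the Fej\'er kernel on the circle, and put $\Phi_N:=\prod_{\ell=1}^K F_N(\gamma_\ell\,\cdot\,)\in\calT$. Then $\Phi_N\ge 0$ on $\mR$, its spectrum is contained in $\{\sum_\ell j_\ell\gamma_\ell:|j_\ell|\le N\}\subset\Gamma$, and, because $\gamma_1,\dots,\gamma_K$ are $\mZ$-independent, $\mathbf{m}(\Phi_N)=1$ while $\widehat{\Phi_N}(\mu)=\prod_{\ell=1}^K\bigl(1-\tfrac{|c_\ell(\mu)|}{N+1}\bigr)$ for $\mu\in\Gamma$ with $\max_\ell|c_\ell(\mu)|\le N$, and $\widehat{\Phi_N}(\mu)=0$ for every other $\mu\in\mR$. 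Define the Bochner--Fej\'er polynomial
$$
(\sigma_N f)(x):=\mathbf{m}_y\bigl[\Phi_N(y)\,f(x-y)\bigr]\qquad(x\in\mR),
$$
which is legitimate since $y\mapsto\Phi_N(y)f(x-y)$ belongs to $AP$. Expanding $\Phi_N$ into its finitely many exponentials and using translation invariance of $\mathbf{m}$ together with $\mathbf{m}_y[e^{i\nu y}f(x-y)]=e^{i\nu x}\,\mathbf{m}(fe_{-\nu})=e^{i\nu x}\,\widehat f(\nu)$, one gets $\sigma_N f\in\calT$ with $\widehat{\sigma_N f}(\mu)=\widehat{\Phi_N}(\mu)\,\widehat f(\mu)$ for every $\mu\in\mR$. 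In particular $\widehat{\sigma_N f}(\mu)=0$ whenever $\widehat f(\mu)=0$, so $\Sigma(\sigma_N f)\subset\Sigma(f)$, the first required property.

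For the uniform convergence I would combine two elementary facts. First, since $\Phi_N\ge 0$ and $\mathbf{m}(\Phi_N)=1$, the map $g\mapsto\sigma_N g$ is a contraction on $AP$: pointwise, $|(\sigma_N g)(x)|\le\mathbf{m}_y[\Phi_N(y)\,|g(x-y)|]\le\|g\|_\infty$. Second, the frequencies of $q$ lie in $\Gamma$ by construction, so writing $q=\sum_j a_j e_{\mu_j}$ we have $\sigma_N q=\sum_j a_j\prod_\ell\bigl(1-\tfrac{|c_\ell(\mu_j)|}{N+1}\bigr)e_{\mu_j}$ once $N\ge\max_{j,\ell}|c_\ell(\mu_j)|$, whence $\sigma_N q\to q$ uniformly on $\mR$ as $N\to\infty$. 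Choosing $N=N_n$ so large that $\|\sigma_{N_n}q-q\|_\infty<\tfrac1{3n}$ and setting $p_n:=\sigma_{N_n}f$, we obtain
$$
\|p_n-f\|_\infty\le\|\sigma_{N_n}(f-q)\|_\infty+\|\sigma_{N_n}q-q\|_\infty+\|q-f\|_\infty<\tfrac1{3n}+\tfrac1{3n}+\tfrac1{3n}=\tfrac1n,
$$
so $(p_n)_{n\in\mN}$ converges uniformly to $f$ on $\mR$ while $\Sigma(p_n)\subset\Sigma(f)$ for each $n$.

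I expect the main obstacle to be purely a matter of arranging the right bookkeeping rather than any analytic difficulty: it is the passage to a $\mZ$-\emph{basis} $\gamma_1,\dots,\gamma_K$ of $\Gamma$ (not merely a generating set) that simultaneously forces $\mathbf{m}(\Phi_N)=1$ and yields the clean multiplier identity $\widehat{\sigma_N f}=\widehat{\Phi_N}\,\widehat f$, and it is the insistence that the frequencies of the auxiliary polynomial $q$ already lie in $\Gamma$ that guarantees $\sigma_N q\to q$. Once these choices are in place, every remaining step is a finite trigonometric computation relying only on the linearity, monotonicity and translation invariance of the mean value, so nothing beyond the definition of $AP$ is needed.
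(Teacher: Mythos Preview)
Your argument is correct and is precisely the classical Bochner--Fej\'er construction. Note, however, that the paper does not supply its own proof of this proposition: it is stated with a reference to \cite{Cor} and \cite{BKS}, where exactly this Bochner--Fej\'er argument is carried out. So there is nothing to compare against; your proof is the standard one those references contain. One cosmetic point: the inclusion of $\lambda_1,\dots,\lambda_n$ among the $\mu_j$ plays no role in your estimates (everything already works with just the frequencies of $q$), so you may drop it without loss.
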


\noindent Analogous to the classical Fourier theory where the Fourier coefficients of the pointwise multiplication of sufficiently regular functions is given by the convolution of their Fourier coefficients, we have the following. 

\begin{lemma}
\label{lemma_8_may_2021_15:17}
Let $f\in L^\infty_{\mu}(\mR_B)$ and $g\in L^2_{\mu}(\mR_B)$.  Then for all $\lambda\in \mR$,
$$
(\calF^{-1}\iota^{-1}(fg))(\lambda)=\sum_{\alpha\in \mR}(\calF^{-1}\iota^{-1}f)(\alpha) \;\!
(\calF^{-1} \iota^{-1}g)(\lambda-\alpha).
$$
\end{lemma}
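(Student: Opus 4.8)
The plan is to reduce the identity to a term-by-term Hilbert-space computation in $L^2_\mu(\mR_B)$ relative to the orthonormal basis $\{\iota e_\alpha:\alpha\in\mR\}$, after first recording a Parseval-type formula for $\calF^{-1}\iota^{-1}$.

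\emph{Step 1.} Since $\calF\colon\ell^2(\mR)\to AP^2$ is a Hilbert space isomorphism carrying $\delta_\lambda$ (the element of $\ell^2(\mR)$ equal to $1$ at $\lambda$ and $0$ elsewhere) to $e_\lambda$, both $\calF$ and $\iota$ are unitary, $\{e_\lambda\}_{\lambda\in\mR}$ is an orthonormal basis of $AP^2$, and $\{\iota e_\lambda\}_{\lambda\in\mR}$ is an orthonormal basis of $L^2_\mu(\mR_B)$. Consequently, for every $h\in L^2_\mu(\mR_B)$ and $\lambda\in\mR$,
$$
(\calF^{-1}\iota^{-1}h)(\lambda)=\langle\calF^{-1}\iota^{-1}h,\delta_\lambda\rangle_{\ell^2(\mR)}=\langle h,\iota e_\lambda\rangle_{L^2_\mu(\mR_B)}.
$$
Abbreviate $\widehat h:=\calF^{-1}\iota^{-1}h$. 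As $\mu$ is finite, $f\in L^\infty_\mu(\mR_B)\subset L^2_\mu(\mR_B)$, and $fg\in L^2_\mu(\mR_B)$ because $f$ is bounded; so the left-hand side of the claimed identity is $\langle fg,\iota e_\lambda\rangle$, and $\widehat f(\lambda)=\langle f,\iota e_\lambda\rangle$ for all $\lambda$.

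\emph{Step 2.} Expanding $g$ in this basis, $g=\sum_\beta\widehat g(\beta)\,\iota e_\beta$ with convergence in $L^2_\mu(\mR_B)$, the sum taken over the countable support of $\widehat g$. Since $f\in L^\infty_\mu(\mR_B)$, the multiplication operator $M_f$ is bounded on $L^2_\mu(\mR_B)$, so $fg=\sum_\beta\widehat g(\beta)\,f\,\iota e_\beta$ in $L^2_\mu(\mR_B)$ --- this is the point where boundedness of $f$, not merely square-integrability, is used. Pairing with $\iota e_\lambda$ and invoking continuity of the inner product,
$$
\langle fg,\iota e_\lambda\rangle=\sum_\beta\widehat g(\beta)\,\langle f\,\iota e_\beta,\iota e_\lambda\rangle.
$$
For each $\beta$, using $\overline{\iota e_\beta}=\iota e_{-\beta}$ and $\iota e_{-\beta}\cdot\iota e_\lambda=\iota e_{\lambda-\beta}$,
$$
\langle f\,\iota e_\beta,\iota e_\lambda\rangle=\langle f,\overline{\iota e_\beta}\,\iota e_\lambda\rangle=\langle f,\iota e_{\lambda-\beta}\rangle=\widehat f(\lambda-\beta),
$$
the last equality by Step 1. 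Hence $\langle fg,\iota e_\lambda\rangle=\sum_\beta\widehat g(\beta)\,\widehat f(\lambda-\beta)$, and re-indexing by $\alpha=\lambda-\beta$ gives $\sum_\alpha\widehat f(\alpha)\,\widehat g(\lambda-\alpha)$, which is the asserted formula. The re-indexing is permissible because the series converges absolutely: by the Cauchy--Schwarz inequality in $\ell^2(\mR)$ recorded in the preliminaries, $\sum_\alpha|\widehat f(\alpha)|\,|\widehat g(\lambda-\alpha)|\le\|f\|_2\,\|g\|_2<\infty$.

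\emph{Main difficulty.} I do not anticipate a genuine obstacle: this is the almost-periodic counterpart of the classical fact that the Fourier coefficients of a product are the convolution of the Fourier coefficients, proved by the same routine Hilbert-space manipulation. The only points needing care are the bookkeeping with complex conjugates (entirely controlled by $\overline{\iota e_\lambda}=\iota e_{-\lambda}$ and $\iota(e_\mu e_\nu)=\iota e_{\mu+\nu}$) and the justification that the series for $g$ may be multiplied by $f$ and then interchanged with the inner product --- which is exactly why one expands only $g$ and keeps $f$ intact, using $\|f\varphi\|_2\le\|f\|_\infty\|\varphi\|_2$. Alternatively one could expand $f$ instead, after rewriting $\langle fg,\iota e_\lambda\rangle=\langle f,\overline{g}\,\iota e_\lambda\rangle$; this avoids the re-indexing step but obscures the role of the hypothesis $f\in L^\infty_\mu(\mR_B)$.
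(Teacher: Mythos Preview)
Your proof is correct, and it takes a genuinely different route from the paper's. The paper first establishes the identity for $f,g\in\iota\calT$ by direct computation with finite sums, and then passes to general $f\in L^\infty_\mu(\mR_B)$, $g\in L^2_\mu(\mR_B)$ via a double approximation: $f_n\to f$ uniformly and $g_n\to g$ in $L^2$, bounding the difference of convolution sums by Cauchy--Schwarz. You instead work entirely in $L^2_\mu(\mR_B)$: expand only $g$ in the orthonormal basis $\{\iota e_\beta\}$, push the bounded operator $M_f$ through the series, and then compute each coefficient $\langle f\,\iota e_\beta,\iota e_\lambda\rangle=\widehat f(\lambda-\beta)$ using the character identities $\overline{\iota e_\beta}=\iota e_{-\beta}$ and $\iota e_{-\beta}\,\iota e_\lambda=\iota e_{\lambda-\beta}$. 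Your argument is shorter and sidesteps the need to approximate $f$ at all; in particular it avoids the paper's appeal to uniform trigonometric approximation of $f$, which as stated applies only to $f\in C(\mR_B)$ rather than to an arbitrary $f\in L^\infty_\mu(\mR_B)$. The paper's approach, on the other hand, makes the classical ``convolution of Fourier coefficients'' heuristic more visible by literally multiplying two trigonometric polynomials first.
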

\begin{proof} We first show this for $f,g\in \iota \calT$, and then use a continuity argument 
using the density of $\calT$ in $AP^2$. For $f,g\in \iota \calT$, we have $\Sigma(\iota^{-1}f),\Sigma(\iota^{-1}g)$ are finite subsets of $\mR$, and 
\begin{eqnarray*}
\iota^{-1} f =\sum_{\alpha \;\!\in\;\! \mR}
(\calF^{-1} \iota^{-1} f)(\alpha) \;\!e_\alpha,
&\quad &
\iota^{-1} g =\sum_{\beta\;\! \in\;\! \mR}
(\calF^{-1} \iota^{-1} g)(\beta) \;\! e_\beta.
\end{eqnarray*}
So 
\begin{eqnarray*}
\iota^{-1} (fg)
&=& (\iota^{-1} f)\;\! \iota^{-1} g
= \Big( \sum_{\alpha\;\!\in\;\!  \mR}
(\calF^{-1} \iota^{-1} f)(\alpha)\;\! e_\alpha\Big)
\sum_{\beta \;\!\in \;\!\mR}
(\calF^{-1} \iota^{-1} g)(\beta) \;\!e_\beta\\
&=&
\sum_{\alpha \;\!\in\;\! \mR}
\sum_{\beta \;\!\in\;\! \mR}
(\calF^{-1} \iota^{-1} f)(\alpha)
\;\!
(\calF^{-1} \iota^{-1} g)(\beta)
\;\!e_{\alpha+\beta}.
\end{eqnarray*}
We have 
$$
{\mathbf{m}}(e_a)=\left\{ \begin{array}{ll} 
\langle \iota e_0,\iota e_0\rangle=1 & \textrm{if }a=0,\\
\langle \iota e_a,\iota e_0\rangle=0 & \textrm{if }a\neq 0.
\end{array}\right.
$$
Thus 
\begin{eqnarray*}
(\calF^{-1} \iota^{-1} (fg))(\lambda)
 &=& {\mathbf{m}}(\iota^{-1}(fg)\;\!e_{-\lambda} )
 \phantom{ \sum_{\alpha \in \mR}}
\\
&=& \sum_{\alpha\;\! \in \;\!\mR}
\sum_{\beta\;\! \in \;\!\mR}
(\calF^{-1} \iota^{-1} f)(\alpha)
\;\!
(\calF^{-1} \iota^{-1} g)(\beta)\;\!
{\mathbf{m}}(e_{\alpha+\beta-\lambda}) 
\\
 &=& 
\sum_{\alpha\;\! \in\;\! \mR}
(\calF^{-1} \iota^{-1} f)(\alpha)
\;\!
(\calF^{-1} \iota^{-1} g)(\lambda-\alpha) .
\end{eqnarray*}
Now consider the general case when $f\in L^\infty_{\mu}(\mR_B)$ and $g\in L^2_{\mu}(\mR_B)$. 
Then we can find sequences $(f_n),(g_n)$ in $\iota \calT$ such that 
\begin{itemize}
\item $( f_n)_{n\in \mN}$ converges uniformly to $f$, 
\item $( g_n)_{n\in \mN}$ converges to $g$ in $AP^2$, and 
\item for all $n\in \mN$, $\Sigma(\iota^{-1} f_n)\subset \Sigma(\iota^{-1}f)$, and $\Sigma(\iota^{-1}g_n)\subset \Sigma(\iota^{-1}g)$.
\end{itemize}
We remark that the $g_n$ can be constructed by simply `truncating' the `Bohr series' of $\iota^{-1}g$, since 
$$
\sup_{\substack{F\;\!\subset\;\! \Sigma(\iota^{-1} g)\\ F\textrm{ finite}} }\;
\sum_{\beta\;\!\in\;\! F} |(\calF^{-1} \iota^{-1} g)(\beta)|^2=\|g\|_2^2.
$$
Then, with $\widehat{\cdot}:= \calF^{-1} \iota^{-1}$, we have 
\begin{eqnarray*}
&&\Big| \sum_{\alpha\;\!\in\;\! \mR} (\widehat{f_n})(\alpha) \;\!(\widehat{g_n})(\lambda-\alpha) 
-\widehat{f}(\alpha)\;\!\widehat{g}(\lambda-\alpha)\Big|\\
&\leq & \sum_{\alpha\;\!\in\;\! \mR}
|\widehat{f_n}(\alpha)| |\widehat{g_n}(\lambda -\alpha)-\widehat{g}(\lambda-\alpha)| 
+ 
\sum_{\alpha\;\!\in \;\!\mR}|\widehat{f_n}(\alpha)-\widehat{f}(\alpha)||\widehat{g}(\lambda-\alpha)| \\
&\leq &
\|f_n\|_2 \|g_n-g\|_2+\|f_n-f\|_2 \|g\|_2\phantom{\sum_{\alpha\in \mR}}\textrm{(Cauchy-Schwarz)}
\\
&\leq& 
\|f_n\|_\infty \|g_n-g\|_2+\|f_n-f\|_\infty \|g\|_2
\stackrel{n\rightarrow\infty}{\longrightarrow} 
\|f\|_\infty \cdot 0 +0\cdot \|g\|_2=0.
\end{eqnarray*}
This completes the proof.
\end{proof}

\section{Characterisation of doubly invariant subspaces} 
   
\noindent In this section, we state and prove our main results, namely Theorem~\ref{thm_23_april_2021_18:38} and Corollary~\ref{corollary_20_april_2021_18:15}. Theorem~\ref{thm_23_april_2021_18:38}  is a straightforward adaptation\footnote{It is clear that there is but little novelty in the proof of our Theorem~\ref{thm_23_april_2021_18:38}. It may be argued that all this is implicit in the considerable literature on the subject of doubly invariant subspaces in quite general settings; see notably \cite{Sri}, \cite{HasSri}. Let us then make it explicit!} of the proof of the classical 
version of the theorem given in \cite[Theorem ~1.2.1, p.8]{Nik}.   On the other hand, the main result of the article is Corollary~\ref{corollary_20_april_2021_18:15}, which follows from Theorem~\ref{thm_23_april_2021_18:38} 
by an application of 
Lemma~\ref{lemma_8_may_2021_15:17}. 

 For a measurable set $\sigma\subset \mR_B$, let 
 $\mathbf{1}_\sigma\in L^\infty_{\mu}(\mR_B)$ denote the characteristic function of $\sigma$, i.e.,
$$
 \mathbf{1}_\sigma(\xi)=\left\{\begin{array}{ll} 1& \textrm{if }\; \xi \in \sigma,\\
 0 & \textrm{if }\; \xi\in \mR_B \setminus \sigma.
 \end{array}\right.
 $$

   \goodbreak 
   
 \begin{theorem}
 \label{thm_23_april_2021_18:38}
 Let $E\subset L^2_{\mu}(\mR_B)$ be a closed subspace of $L^2_{\mu}(\mR_B)$.  
 
 \noindent Then the following are equivalent:
 \begin{enumerate}
 \item $M_{\iota(e_\lambda)} E=E$ for all $\lambda \in \mR$.
 \item There exists a unique measurable set $\sigma \subset \mR_B$ such that 
 $$
 E=M_{\mathbf{1}_\sigma} L^2_{\mu}(\mR_B)\\
 =\{ f\in L^2_{\mu}(\mR_B): f=0 \;\mu\textrm{\em-a.e. on }\mR_B\setminus \sigma\}.
 $$
 \end{enumerate}
 \end{theorem}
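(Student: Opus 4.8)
The plan is to transplant the classical argument of \cite[Theorem~1.2.1, p.8]{Nik} to $L^2_{\mu}(\mR_B)$, with the role of the trigonometric polynomials in $z$ and $\bar z$ played by $\iota(\calT)$; the relevant properties of $\iota(\calT)$ are that it is dense in $L^2_{\mu}(\mR_B)$ (because $\calT$ is dense in $AP^2$ by definition and $\iota$ is an isomorphism) and closed under complex conjugation (because $\overline{\iota(e_\lambda)}=\iota(e_{-\lambda})$). I would first dispose of the implication $(2)\Rightarrow(1)$ and of uniqueness. From $\iota(e_\lambda)\,\overline{\iota(e_\lambda)}=\iota(e_\lambda e_{-\lambda})=\iota(e_0)=\mathbf{1}_{\mR_B}$ we get $|\iota(e_\lambda)|=1$ $\mu$-a.e., so each $M_{\iota(e_\lambda)}$ is unitary, in particular $M_{\iota(e_\lambda)}L^2_{\mu}(\mR_B)=L^2_{\mu}(\mR_B)$; since multiplication operators commute, $M_{\iota(e_\lambda)}\big(M_{\mathbf{1}_\sigma}L^2_{\mu}(\mR_B)\big)=M_{\mathbf{1}_\sigma}\big(M_{\iota(e_\lambda)}L^2_{\mu}(\mR_B)\big)=M_{\mathbf{1}_\sigma}L^2_{\mu}(\mR_B)$, which is $(2)\Rightarrow(1)$. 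For uniqueness, $\mathbf{1}_\sigma\in L^2_{\mu}(\mR_B)$ because $\mu(\mR_B)=1$, so $M_{\mathbf{1}_{\sigma_1}}L^2_{\mu}(\mR_B)=M_{\mathbf{1}_{\sigma_2}}L^2_{\mu}(\mR_B)$ implies $\mathbf{1}_{\sigma_1}$ vanishes $\mu$-a.e. off $\sigma_2$ and symmetrically, whence $\mu(\sigma_1\setminus\sigma_2)=\mu(\sigma_2\setminus\sigma_1)=0$.

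For $(1)\Rightarrow(2)$, let $P$ be the orthogonal projection of $L^2_{\mu}(\mR_B)$ onto $E$. Because $M_{\iota(e_\lambda)}$ is bijective with $M_{\iota(e_\lambda)}^{*}=M_{\overline{\iota(e_\lambda)}}=M_{\iota(e_{-\lambda})}=M_{\iota(e_\lambda)}^{-1}$, the hypothesis $M_{\iota(e_\lambda)}E=E$ yields both $M_{\iota(e_\lambda)}E\subseteq E$ and $M_{\iota(e_\lambda)}^{*}E\subseteq E$. The first gives $M_{\iota(e_\lambda)}P=PM_{\iota(e_\lambda)}P$; the second gives, after taking adjoints, $PM_{\iota(e_\lambda)}=PM_{\iota(e_\lambda)}P$; hence $P$ commutes with each $M_{\iota(e_\lambda)}$, and by linearity with $M_q$ for every $q\in\iota(\calT)$.

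Now set $\varphi:=P\mathbf{1}_{\mR_B}=P\,\iota(e_0)\in L^2_{\mu}(\mR_B)$. For $q\in\iota(\calT)$ we have $Pq=PM_q\mathbf{1}_{\mR_B}=M_qP\mathbf{1}_{\mR_B}=q\varphi$, hence $\|q\varphi\|_2=\|Pq\|_2\le\|q\|_2$. The central step is to deduce that $\varphi\in L^\infty_{\mu}(\mR_B)$ with $\|\varphi\|_\infty\le1$. For $f\in L^2_{\mu}(\mR_B)$, pick $q_n\in\iota(\calT)$ with $q_n\to f$ in $L^2_{\mu}(\mR_B)$; then $q_n\varphi=Pq_n\to Pf$ in $L^2_{\mu}(\mR_B)$, and passing to a subsequence along which both $q_n\to f$ and $q_n\varphi\to Pf$ hold $\mu$-a.e. shows $f\varphi=Pf$ $\mu$-a.e., so $f\varphi\in L^2_{\mu}(\mR_B)$ and $M_\varphi=P$ on all of $L^2_{\mu}(\mR_B)$; in particular $M_\varphi$ is bounded with $\|M_\varphi\|\le1$. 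Since $\mu$ is a probability measure, this forces $|\varphi|\le1$ $\mu$-a.e.: otherwise $\mathbf{1}_{\{|\varphi|>1+1/n\}}$ would belong to $L^2_{\mu}(\mR_B)$ and satisfy $\|M_\varphi\mathbf{1}_{\{|\varphi|>1+1/n\}}\|_2>\|\mathbf{1}_{\{|\varphi|>1+1/n\}}\|_2$ for some $n$. I expect this $L^\infty$-bound to be the only step requiring genuine care; the remainder is routine.

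Finally, since $P=M_\varphi$ is an orthogonal projection, $M_{\overline\varphi}=M_\varphi^{*}=M_\varphi$ and $M_{\varphi^2}=M_\varphi^{2}=M_\varphi$, so $\overline\varphi=\varphi$ and $\varphi^2=\varphi$ hold $\mu$-a.e.; thus $\varphi$ is $\{0,1\}$-valued $\mu$-a.e. With $\sigma:=\{\xi\in\mR_B:\varphi(\xi)=1\}$, a $\mu$-measurable set, we have $\varphi=\mathbf{1}_\sigma$ $\mu$-a.e., and hence
$$
E=\mathrm{ran}\,P=\mathrm{ran}\,M_{\mathbf{1}_\sigma}=M_{\mathbf{1}_\sigma}L^2_{\mu}(\mR_B)=\{f\in L^2_{\mu}(\mR_B):f=0\ \mu\text{-a.e. on }\mR_B\setminus\sigma\}.
$$
Uniqueness of $\sigma$ was established in the first paragraph, so the proof is complete.
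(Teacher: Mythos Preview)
Your proof is correct but organized differently from the paper's. The paper sets $f:=P_E\mathbf{1}_{\mR_B}$ and uses the orthogonality $\mathbf{1}_{\mR_B}-f\perp M_{\iota(e_\lambda)}f$ together with density of $\iota(\calT)$ to conclude directly that $f(\mathbf{1}_{\mR_B}-\bar f)=0$ $\mu$-a.e., whence $f=\mathbf{1}_\sigma$; it then establishes the two inclusions $\mathbf{1}_\sigma L^2_\mu(\mR_B)\subset E$ and $E\subset\mathbf{1}_\sigma L^2_\mu(\mR_B)$ by separate closure and orthogonal-complement arguments. You instead first prove the operator identity $P=M_\varphi$ on all of $L^2_\mu(\mR_B)$ (via the commutation $PM_{\iota(e_\lambda)}=M_{\iota(e_\lambda)}P$ and a density/subsequence step), and then read off $\bar\varphi=\varphi=\varphi^2$ from $P^*=P=P^2$, obtaining $E=\mathrm{ran}\,P=M_{\mathbf{1}_\sigma}L^2_\mu(\mR_B)$ in one stroke. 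Your route makes the reducing-subspace character of the hypothesis explicit and absorbs the two inclusion checks into the single equality $P=M_{\mathbf{1}_\sigma}$; the paper's route avoids the intermediate $L^\infty$-bound on $\varphi$ and the a.e.-subsequence extraction, working entirely at the level of inner products.
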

 \begin{proof} (2)$\Rightarrow$(1): Let $f\in E$. Then there exists an element $\varphi\in L^2_{\mu}(\mR_B)$ such that $f= M_{\mathbf{1}_\sigma}\varphi=\mathbf{1}_\sigma \varphi$. For all $\lambda \in \mR$, $\iota (e_\lambda)\in L^\infty_{\mu}(\mR_B)$, and 
 so 
 $$
 M_{\iota(e_\lambda)} f
 =\iota(e_\lambda)(\mathbf{1}_\sigma \;\!\varphi)
 =(\iota(e_\lambda)\mathbf{1}_\sigma ) \;\!\varphi 
 =\mathbf{1}_\sigma(\iota(e_\lambda)\;\!\varphi)
 =M_{\mathbf{1}_\sigma}\psi,
 $$
 where $\psi:=\iota(e_\lambda)\varphi\in L^2(\mR_B, \mu)$, and so $M_{\iota(e_\lambda)} f\in E$. Thus $M_{\iota(e_\lambda)} E\subset E$. Moreover, 
 \begin{eqnarray*}
 f&=&\mathbf{1}_\sigma \;\!\varphi=(\mathbf{1}_{\mR_B}\mathbf{1}_\sigma) \;\!\varphi
 = (\iota(e_0)\mathbf{1}_\sigma)\;\!\varphi=(\iota(e_{\lambda-\lambda})\mathbf{1}_\sigma)\;\!\varphi
 \\
 &=&\iota(e_\lambda)(\mathbf{1}_\sigma\;\!\iota(e_{-\lambda})\;\!\varphi)=M_{\iota(e_\lambda)} g,
 \end{eqnarray*}
 where $g:=\mathbf{1}_\sigma (\iota(e_{-\lambda})\;\!\varphi)\in M_{\mathbf{1}_\sigma}L^2(\mR_B, \mu)$. Hence $f\in M_{\iota(e_\lambda)} E$. Consequently, $E\subset M_{\iota(e_\lambda)} E$ too.
 
 \medskip 
 
 \noindent (1)$\Rightarrow$(2): Let $P_E:L^2_{\mu}(\mR_B)\rightarrow L^2_{\mu}(\mR_B)$ be the orthogonal projection onto the closed subspace $E$. Set $f=P_E \mathbf{1}_{\mR_B}$. Let $I$ be the identity map on $L^2_{\mu}(\mR_B)$. We claim that 
 $$
 \phantom{AAAAAA}\mathbf{1}_{\mR_B}-f\perp E. \phantom{AAAAAA}\quad \quad \quad (\star)
 $$
 Indeed, for all $g\in E$, 
 \begin{eqnarray*}
 \langle \mathbf{1}_{\mR_B}-f, g\rangle 
& =&
 \langle (I-P_E) \mathbf{1}_{\mR_B}, P_E g\rangle
 =
 \langle (P_E)^*(I-P_E) \mathbf{1}_{\mR_B},  g\rangle
\\
& =&
 \langle P_E(I-P_E) \mathbf{1}_{\mR_B},  g\rangle
 =
 \langle 0,g\rangle=0.
 \end{eqnarray*}
 As $f=P_E \mathbf{1}_{\mR_B}\in E$ and $M_{\iota(e_\lambda)} E=E$ for all $\lambda \in \mR$, 
 we have $\mathbf{1}_{\mR_B}-f \perp M_{\iota(e_\lambda)} f$ for all $\lambda \in \mR$. So for all 
 $p\in \calT$, 
 $$
 \int_{\mR_B} \iota(p) f (\mathbf{1}_{\mR_B}-\overline{f}) \;\!d\mu =0.
 $$
 But $\calT$ is dense in $AP^2$, and $\mu$ is a finite positive Borel measure. So 
 $$
 f(\mathbf{1}_{\mR_B}-\overline{f})=0\;\; \mu{\textrm{-a.e.}}
 $$
 Thus $f=|f|^2$ $\mu$-a.e., so that $f(\xi)\in\{0,1\}$ for all $\xi \in \mR_B$. Set 
 $$
 \sigma=\{\xi\in \mR_B: f(\xi)=1\}.
 $$
 Then $f=\mathbf{1}_{\sigma}$ $\mu$-a.e. As $\mathbf{1}_{\sigma}=f=P_E \mathbf{1}_{\mR_B}\in E$, and 
 as $M_{\iota(e_\lambda)} E=E$ for all $\lambda \in \mR$, it follows that 
 $\mathbf{1}_{\sigma} \;\!\iota(\calT)\subset E$. But $E$ is closed, and thus 
 $$
 \textrm{closure}(\mathbf{1}_{\sigma} \;\!\iota(\calT))\subset E.
 $$
 Since $ \textrm{closure}(\calT)=AP^2$, we conclude that $ \mathbf{1}_{\sigma}L^2_{\mu}(\mR_B)\subset E$. 
 
 Next we want to show that $E\subset \mathbf{1}_{\sigma}L^2_{\mu}(\mR_B)$. 
 To this end, let $g\in E$ be orthogonal to $\mathbf{1}_{\sigma}\;\!L^2_{\mu}(\mR_B)$. In particular, for all $\lambda \in \mR$, 
 $$
 \phantom{AAAAAA}
 \int_{\mR_B} g \;\! \mathbf{1}_{\sigma} \;\!\iota(e_\lambda) \;\!d\mu=0.
  \phantom{AAAAAA} \quad \quad \quad \quad \quad(\asterisk)
 $$
 We want to show that $g=0$. Since $g\in E$, $M_{\iota(e_{\lambda})} g\in E$ for all $\lambda$. 
 So by ($\star$) above, $\mathbf{1}_{\mR_B}-\mathbf{1}_{\sigma} \perp M_{\iota(e_{\lambda})} g$, 
 and noting that $\mathbf{1}_{\mR_B},\mathbf{1}_{\sigma}$ are real-valued, 
 $$
 \phantom{AAAAAa}
  \int_{\mR_B} g\;\!\iota(e_\lambda) (\mathbf{1}_{\mR_B}-\mathbf{1}_{\sigma}) \;\!d\mu =0. 
   \phantom{AAAAA}\quad \quad \quad (\asterisk\asterisk)
 $$
 Hence, using the density of $\iota(\calT)$ in $L^2_{\mu}(\mR_B)$, we obtain from ($\asterisk$) and ($\asterisk \asterisk$) that  
 \begin{eqnarray*}
 g \;\! \mathbf{1}_{\sigma}\!\!&=&\!\!0 \;\;\quad\mu\textrm{-a.e.} \\
  g \;\!( \mathbf{1}_{\mR_B}-\mathbf{1}_{\sigma})\!\!&=&\!\!0 \;\;\quad \mu\textrm{-a.e.}
  \end{eqnarray*}
  Thus $g=g\mathbf{1}_{\mR_B}=0$ $\mu$-a.e., as wanted.
  
  The uniqueness of $\sigma$ up to a set of $\mu$-measure $0$ can be seen as follows: If $E=M_{\mathbf{1}_\sigma} L^2_{\mu}(\mR_B)=M_{\mathbf{1}_{\sigma'}} L^2_{\mu}(\mR_B)$, then 
  taking $\mathbf{1}_B\in L^2_\mu(\mR_B)$, we must have $\mathbf{1}_\sigma=\mathbf{1}_{\sigma'} \varphi$ for some $\varphi \in L^2_\mu(\mR_B)$. So $\sigma\subset \sigma'$. Similarly, $\sigma\subset \sigma'$ as well. 
 \end{proof}
 
 \noindent We now interpret the above characterisation result for doubly invariant subspaces of $AP^2$ in terms of the Bohr coefficients of elements of $E$. Given a measurable set $\sigma \subset \mR_B$, define 
$\widehat{\sigma}\in \ell^2(\mR)$ by 
$$
\widehat{\sigma}(\lambda)=\int_{\mR_B} \mathbf{1}_\sigma \;\!\iota(e_{-\lambda})\;\!d\mu
=\int_\sigma \iota (e^{-i\lambda \cdot}) \;\!d\mu.
$$

 \goodbreak

\begin{corollary}
\label{corollary_20_april_2021_18:15}
 Let $E\subset \ell^2(\mR)$ be a closed subspace of $\ell^2(\mR)$. 
 
 \noindent Then the following are equivalent:
 \begin{enumerate}
 \item $S_\lambda E=E$ for all $\lambda \in \mR$.
 \item There exists a unique measurable set $\sigma \subset \mR_B$ such that 
 \begin{eqnarray*}
 \phantom{Aai}
  E\!\!\!\!&=&\!\!\!\!(\calF^{-1} \iota^{-1} M_{\mathbf{1}_\sigma} \iota\;\!\calF) \;\! \ell^2(\mR)
  \\
 \!\!\!\! &=&\!\!\!\! \Big\{ f:\mR\rightarrow \mC\;\! \Big|\;\! f(\lambda)=\sum_{\alpha \in \mR} \widehat{\sigma}(\lambda-\alpha) \;\!\varphi(\alpha), \;\varphi \in \ell^2(\mR)\Big\}.
\end{eqnarray*}
 \end{enumerate}
 \end{corollary}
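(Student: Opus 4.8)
The plan is to obtain Corollary~\ref{corollary_20_april_2021_18:15} by transporting Theorem~\ref{thm_23_april_2021_18:38} along the Hilbert space isomorphism $U:=\iota\,\calF:\ell^2(\mR)\to L^2_{\mu}(\mR_B)$. The excerpt records the intertwining relation $\calF^{-1}\iota^{-1}(M_{\iota(e_\lambda)}f)=S_\lambda(\calF^{-1}\iota^{-1}f)$ for $f\in L^2_{\mu}(\mR_B)$, which says precisely that $U^{-1}M_{\iota(e_\lambda)}U=S_\lambda$ for every $\lambda\in\mR$. Consequently $E\mapsto UE$ is a bijection between closed subspaces of $\ell^2(\mR)$ and closed subspaces of $L^2_{\mu}(\mR_B)$ under which $S_\lambda E=E$ for all $\lambda$ holds if and only if $M_{\iota(e_\lambda)}(UE)=UE$ for all $\lambda$. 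This single observation is what lets conditions (1) and (2) be passed between the two settings in both directions.

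So first I would assume $S_\lambda E=E$ for all $\lambda\in\mR$, apply Theorem~\ref{thm_23_april_2021_18:38} to the closed subspace $UE\subset L^2_{\mu}(\mR_B)$ to get a unique measurable $\sigma\subset\mR_B$ with $UE=M_{\mathbf{1}_\sigma}L^2_{\mu}(\mR_B)$, and then conjugate back: $E=U^{-1}M_{\mathbf{1}_\sigma}U\,\ell^2(\mR)=(\calF^{-1}\iota^{-1}M_{\mathbf{1}_\sigma}\iota\,\calF)\,\ell^2(\mR)$, which is the first displayed equality in (2). The implication (2)$\Rightarrow$(1) is then immediate: $M_{\mathbf{1}_\sigma}L^2_{\mu}(\mR_B)$ is doubly invariant by the easy direction of Theorem~\ref{thm_23_april_2021_18:38}, and conjugating by $U$ turns $M_{\iota(e_\lambda)}(UE)=UE$ into $S_\lambda E=E$. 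Uniqueness of $\sigma$ (up to a $\mu$-null set) transfers verbatim from Theorem~\ref{thm_23_april_2021_18:38}.

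It remains to unwind $E=(\calF^{-1}\iota^{-1}M_{\mathbf{1}_\sigma}\iota\,\calF)\,\ell^2(\mR)$ into the convolution description. Given $\varphi\in\ell^2(\mR)$, put $g:=\iota\,\calF\varphi\in L^2_{\mu}(\mR_B)$, so that $\calF^{-1}\iota^{-1}g=\varphi$, and apply Lemma~\ref{lemma_8_may_2021_15:17} with the bounded factor $\mathbf{1}_\sigma\in L^\infty_{\mu}(\mR_B)$: for every $\lambda\in\mR$,
$$
(\calF^{-1}\iota^{-1}(M_{\mathbf{1}_\sigma}g))(\lambda)=\sum_{\alpha\in\mR}(\calF^{-1}\iota^{-1}\mathbf{1}_\sigma)(\alpha)\,(\calF^{-1}\iota^{-1}g)(\lambda-\alpha)=\sum_{\alpha\in\mR}\widehat{\sigma}(\alpha)\,\varphi(\lambda-\alpha),
$$
using that $(\calF^{-1}\iota^{-1}\mathbf{1}_\sigma)(\alpha)=\mathbf{m}(\iota^{-1}\mathbf{1}_\sigma\,e_{-\alpha})=\int_{\mR_B}\mathbf{1}_\sigma\,\iota(e_{-\alpha})\,d\mu=\widehat{\sigma}(\alpha)$ (here finiteness of $\mu$ guarantees $\mathbf{1}_\sigma\in L^2_{\mu}(\mR_B)$, so $\widehat{\sigma}$ is legitimately the Bohr transform of $\iota^{-1}\mathbf{1}_\sigma$). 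Reindexing the sum, which is absolutely convergent by the Cauchy--Schwarz estimate stated just before Lemma~\ref{lemma_8_may_2021_15:17}, via $\alpha\mapsto\lambda-\alpha$ gives $f(\lambda)=\sum_{\alpha\in\mR}\widehat{\sigma}(\lambda-\alpha)\,\varphi(\alpha)$; conversely every such $f$ arises from the corresponding $\varphi$. I do not expect a genuine obstacle: all the analytic content is already in Theorem~\ref{thm_23_april_2021_18:38} and Lemma~\ref{lemma_8_may_2021_15:17}, and the only points needing care are the bookkeeping that $E\leftrightarrow UE$ preserves closedness and the invariance condition in both directions, and the identification of $(\calF^{-1}\iota^{-1}\mathbf{1}_\sigma)$ with $\widehat{\sigma}$ together with the reindexing of the convolution.
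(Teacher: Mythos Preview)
Your proposal is correct and follows exactly the route the paper indicates: transport Theorem~\ref{thm_23_april_2021_18:38} through the Hilbert space isomorphism $U=\iota\,\calF$ using the recorded intertwining $U^{-1}M_{\iota(e_\lambda)}U=S_\lambda$, and then invoke Lemma~\ref{lemma_8_may_2021_15:17} (together with the identification $(\calF^{-1}\iota^{-1}\mathbf{1}_\sigma)=\widehat{\sigma}$) to rewrite $(\calF^{-1}\iota^{-1}M_{\mathbf{1}_\sigma}\iota\,\calF)\varphi$ as the convolution $\lambda\mapsto\sum_{\alpha}\widehat{\sigma}(\lambda-\alpha)\varphi(\alpha)$. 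The paper itself gives no further argument beyond pointing to these two ingredients, so your write-up is in fact more detailed than what appears there.
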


\end{document}